
\documentclass[11pt]{amsart}

\usepackage{amsmath, amsthm,afterpage}

\usepackage{amssymb,amscd,amstext,units,mathtools,color,xcolor,enumerate,bm,pifont,comment}
\usepackage[mathscr]{eucal}
\usepackage[T1]{fontenc}
\usepackage[utf8]{inputenc}
\usepackage[all]{xy}
\usepackage[hidelinks]{hyperref}

\usepackage{geometry}


\theoremstyle{plain}
\newtheorem*{cor*}{Corollary}
\newtheorem*{prop*}{Proposition}
\newtheorem*{thm*}{Theorem}
\newtheorem*{notation*}{Notation}
\newtheorem{thm}{Theorem}[section]
\newtheorem{cor}[thm]{Corollary}

\newtheorem{fact}[thm]{Fact}

\theoremstyle{definition}

\newtheorem{question}[thm]{Question}

\numberwithin{equation}{subsection}

\newcommand{\forkindep}[1][]{%
  \mathrel{
    \mathop{
      \vcenter{
        \hbox{\oalign{\noalign{\kern-.3ex}\hfil$\vert$\hfil\cr
              \noalign{\kern-.7ex}
              $\smile$\cr\noalign{\kern-.3ex}}}
      }
    }\displaylimits_{#1}
  }
}  



\providecommand{\customgenericname}{}
\newcommand{\newcustomtheorem}[2]{%
  \newenvironment{#1}[1]
  {%
   \renewcommand\customgenericname{#2}%
   \renewcommand\theinnercustomgeneric{##1}%
   \innercustomgeneric
  }
  {\endinnercustomgeneric}
}

\newcustomtheorem{customthm}{Theorem}






\title{Extensions of definable local homomorphisms in o-minimal structures and semialgebraic groups}


\author{Eliana Barriga}
\address{ Eliana Barriga\\ Department of Mathematics, Ben Gurion University of the Negev, Be’er-Sheva 84105, Israel}
\email{barrigat@post.bgu.ac.il}

\keywords{O-minimality, local homomorphisms, semialgebraic groups, real closed fields, algebraic groups, locally definable groups}
\subjclass[2010]{03C64; 20G20; 22E15; 03C68; 22B99}

\thanks{Supported by the Israel Science Foundation (ISF) grants No. 181/16 and 1382/15.}


\begin{document}

\begin{abstract}
We state conditions for which a definable local homomorphism between
two locally definable groups $\mathcal{G}$, $\mathcal{G^{\prime}}$
can be uniquely extended when $\mathcal{G}$ is simply
connected (Theorem \ref{T:ExtofdefLocHomoSCGp}). As an application
of this result we obtain an easy proof of \cite[Thm. 9.1]{BADefCompIJM}
(see Corollary \ref{C:ExtlocHomLDGpsTFcase}). We also prove that
Theorem 10.2 in \cite{BADefCompIJM} also holds for any definably
connected definably compact semialgebraic group $G$ not necessarily
abelian over a sufficiently saturated real closed field $R$; namely,
that the o-minimal universal covering group $\widetilde{G}$ of $G$
is an open locally definable subgroup of $\widetilde{H\left(R\right)^{0}}$
for some $R$-algebraic group $H$ (Thm. \ref{T:UniveCoverCompactGeneral}).
Finally, for an abelian definably connected semialgebraic group $G$
over $R$, we describe $\widetilde{G}$ as a locally definable extension
of subgroups of the o-minimal universal covering groups of commutative
$R$-algebraic groups (Theorem \ref{T:UniveCoverAbelianGeneral}).
\end{abstract}

\maketitle

\section{Introduction and Preliminaries}\label{Introduction}
The study of definable and locally definable groups has been of importance
in the research in model theory of o-minimal structures, and includes such classes as the semialgebraic and the subanalytic groups. The ordered
real field $\left(\mathbb{R},<,+,\cdot\right)$ as well as its expansion
with the exponential function are examples of o-minimal structures
\cite{LVD}.

\textit{Let $\mathcal{M}$ be a sufficiently $\kappa$-saturated o-minimal
structure. By definable we mean definable in $\mathcal{M}$. }

A group is called \textit{locally definable} if the domain of the
group and the graph of the group operation are a countable unions of
definable sets. Every $n$-dimensional locally definable group $\mathcal{G}$ can be endowed with a unique \textit{topology} $\tau$ making the
group into a topological group such that any $g\in \mathcal{G}$ has a definable neighborhood definably isomorphic to $M^{n}$
\cite[Prop. 2.2]{PS00}. From now on, any topological property on
a locally definable group refers to this $\tau$-topology, unless
stated otherwise.

When a locally definable group is definable, its $\tau$-topology
agrees with the t-topology given by Pillay in \cite[Prop. 2.5]{Pi}.
The t-topology exists for every group $\mathcal{G}$ definable
in an o-minimal structure $\mathcal{N}$ (not necessarily saturated),
and when $\mathcal{N}$ o-minimally expands the reals, $\mathcal{G}$
is a real Lie group \cite{Pi}.

A a locally definable subset $X$ of a locally definable group $\mathcal{G}$
is $\tau$-\textit{connected} if $X$ has no nonempty proper definable
subset ($\tau$-)clopen relative to $X$ such that whose intersection
with any definable subset of $\mathcal{G}$ is definable. When $\mathcal{G}$
is definable, by \cite[Corollary 2.10]{Pi}, there is a unique maximal
definably connected definable subset of $\mathcal{G}$ containing
the identity element of $\mathcal{G}$, which we call \textit{the definable identity component of}
$\mathcal{G}$, and we denoted it by $\mathcal{G}^{0}$. Thus, $\mathcal{G}$
is definably connected if and only if $\mathcal{G}=\mathcal{G}^{0}$,
or, equivalently by \cite{Pi}, if $\mathcal{G}$ has no proper definable
subgroup of finite index. We say that a definable group $\mathcal{G}$
is \textit{definably compact} if every definable path $\gamma:\left(0,1\right)\rightarrow\mathcal{G}$
has limits points in $\mathcal{G}$ (where the limits are taken with
respect to the t-topology on $\mathcal{G}$).

The notions of \textit{path connectedness, homotopy, o-minimal fundamental
group, and simply connectedness} are defined as in algebraic topology
using locally definable maps instead of general maps. For details
on these definitions we refer the reader to \cite{BarOter}.

As in the Lie setting \cite{Chevalley}, Edmundo and Eleftheriou defined and proved
in \cite{EdPan} the existence of the \textit{o-minimal universal covering
group} $\widetilde{\mathcal{G}}$ of a connected locally definable group
$\mathcal{G}$. As in Lie groups, $\widetilde{\mathcal{G}}$ covers
any cover of $\mathcal{G}$ in this category, is simply connected, and allows to study
definable and locally definable groups through them.

In the theory of Lie groups, it is known that when two Lie groups
are locally isomorphic, then their universal covers are (globally)
isomorphic as Lie groups:

\begin{fact}(\cite[Corollary 4.20]{MimuraToda})\label{F:LieGpsLocIsoUCovGlobIso}
Let $\mathcal{H}$ and $\mathcal{H}^{\prime}$ be connected Lie groups,
and $\widetilde{\mathcal{H}}^{Lie}$ and $\widetilde{\mathcal{H}^{\prime}}^{Lie}$
their universal covering Lie groups respectively. Then $\mathcal{H}$ and $\mathcal{H}^{\prime}$
are locally isomorphic if and only if $\widetilde{\mathcal{H}}^{Lie}$ and
$\widetilde{\mathcal{H}^{\prime}}^{Lie}$ are isomorphic as Lie groups.
\end{fact}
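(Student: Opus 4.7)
The backward implication is essentially free: the universal covering map $\pi\colon \widetilde{\mathcal{H}}^{Lie}\to\mathcal{H}$ is a local homeomorphism and a group homomorphism, so it restricts to an isomorphism between some neighborhood of the identity in $\widetilde{\mathcal{H}}^{Lie}$ and a neighborhood of the identity in $\mathcal{H}$; the same holds for $\mathcal{H}'$. Composing this local isomorphism with a given global isomorphism $\widetilde{\mathcal{H}}^{Lie}\cong\widetilde{\mathcal{H}'}^{Lie}$ and with the analogous local isomorphism for $\mathcal{H}'$ yields a local isomorphism $\mathcal{H}\to\mathcal{H}'$.

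For the forward direction, suppose $f\colon U\to U'$ is a local isomorphism between open symmetric neighborhoods of the identities in $\mathcal{H}$ and $\mathcal{H}'$. The plan is to produce a global isomorphism $\widetilde{f}\colon\widetilde{\mathcal{H}}^{Lie}\to\widetilde{\mathcal{H}'}^{Lie}$ by lifting and extending. First, shrink $U$ so that $\pi_{\mathcal H}\colon \widetilde{\mathcal{H}}^{Lie}\to\mathcal H$ has a continuous section $s\colon U\to\widetilde{\mathcal{H}}^{Lie}$ with $s(e)=\widetilde e$; then $\pi_{\mathcal H'}^{-1}\circ f\circ\pi_{\mathcal H}$, restricted to a sufficiently small symmetric neighborhood $\widetilde U$ of $\widetilde e$ in $\widetilde{\mathcal H}^{Lie}$, gives a local homomorphism $\widetilde f_0\colon\widetilde U\to\widetilde{\mathcal H'}^{Lie}$ (lifting is possible because $\widetilde U$ is simply connected, indeed contractible after shrinking). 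The key ingredient is the classical monodromy principle: any local homomorphism from a simply connected, connected topological group to a topological group extends uniquely to a global continuous homomorphism. Applying this to $\widetilde{f}_0$, one gets a global Lie group homomorphism $\widetilde f\colon\widetilde{\mathcal H}^{Lie}\to\widetilde{\mathcal H'}^{Lie}$. Performing the same construction with $f^{-1}$ produces $\widetilde g\colon\widetilde{\mathcal H'}^{Lie}\to\widetilde{\mathcal H}^{Lie}$, and by the uniqueness part of monodromy (the compositions $\widetilde g\circ\widetilde f$ and $\widetilde f\circ\widetilde g$ agree with the identity on neighborhoods of the respective identities) one concludes that $\widetilde f$ is an isomorphism of Lie groups.

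The main technical obstacle is precisely the monodromy / extension step: producing $\widetilde f$ on all of $\widetilde{\mathcal H}^{Lie}$ from its germ at the identity. The standard proof proceeds by defining $\widetilde f(\gamma)$ for a path $\gamma$ in $\widetilde{\mathcal H}^{Lie}$ via a chain of local translates covered by overlapping neighborhoods where $\widetilde f_0$ is already defined, and then showing the result is independent of the chain using simple connectedness (a homotopy of chains can be refined so that consecutive chains differ only in small patches where the local homomorphism property forces agreement). This is exactly the Lie-group analogue of what the paper develops in the o-minimal setting, and one should expect the o-minimal version to require the locally definable replacement of paths, homotopies, and the fundamental group from \cite{BarOter}, together with a careful saturation assumption to ensure that the covering by definable pieces behaves well enough to run the same argument.
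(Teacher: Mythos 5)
Your proposal is correct and follows exactly the route the paper itself indicates: this Fact is quoted from Mimura--Toda without proof, and the paper notes that it ``follows from'' the monodromy/extension theorem for local homomorphisms out of simply connected topological groups (Fact \ref{F:ExtLocHomoSCGp}, Chevalley), which is precisely the key step of your argument. Both directions of your sketch are sound, including the use of uniqueness of the extension to show $\widetilde f$ and $\widetilde g$ are mutually inverse, and your closing remark correctly identifies the path-chain/homotopy-refinement argument as the template the paper adapts to the locally definable setting in Theorem \ref{T:ExtofdefLocHomoSCGp}.
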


We say that two topological groups $\mathcal{H}$ and $\mathcal{H}^{\prime}$
are \textit{locally homomorphic} if there are neighborhoods
$U$ and $U^{\prime}$ of the identities of $\mathcal{H}$ and $\mathcal{H}^{\prime}$ respectively
and a map $f:U\subseteq\mathcal{H}\rightarrow U^{\prime}\subseteq\mathcal{H}^{\prime}$
such that $f\left(hh^{\prime}\right)=f\left(h\right)f\left(h^{\prime}\right)$
whenever $h,h^{\prime}$, and $hh^{\prime}$ belong to $U$ \cite[Definition 2, Chap. 2, Section 7]{Chevalley};
such a map $f$ is called a \textit{local homomorphism} of $\mathcal{H}$
into $\mathcal{H}^{\prime}$, and if in addition $f$ is a homeomorphism,
$f$ is called a \textit{local isomorphism}, and $\mathcal{H}$ and
$\mathcal{H}^{\prime}$ are \textit{locally isomorphic}.

From a model-theoretical point of view, it is natural to ask whether  Fact \ref{F:LieGpsLocIsoUCovGlobIso} holds or not in the category of locally definable
groups.

For this, we will restrict the previous definitions to definable maps.
However, if \textit{$\mathcal{M}$} expands an ordered field $\left(R,<,+,0,\cdot,1\right)$,
the additive group $\left(R,+\right)$ is definably locally isomorphic
to the group $G=\left(\left[0,1\right),+_{mod\,1}\right)$ with addition
modulo $1$ through the map $f:(-\frac{1}{2},\frac{1}{2})\subseteq\left(R,+\right)\rightarrow G$,
$f\left(t\right)=t\,mod\,1$, but $\left(R,+\right)$ and
$\widetilde{G}=\left(\bigcup_{n\in\mathbb{N}}\left(-n,n\right),+\right)\leq\left(R,+\right)$
are not isomorphic as locally definable groups ($\left(R,+\right)$ is definable,
but $\widetilde{G}$ is not).

Fact \ref{F:LieGpsLocIsoUCovGlobIso} follows from a well-known result
for topological groups that assures that a local homomorphism between
topological groups with domain a connected neighborhood of the identity
element of a simply connected group $\mathcal{H}$ can be extended
to a group homomorphism from the whole group $\mathcal{H}$:

\begin{fact}(\cite[Thm. 3, Chap. 2, Section 7]{Chevalley})\label{F:ExtLocHomoSCGp}
Let $\mathcal{H}$ be a simply connected topological group. Let $f$
be a local homomorphism of $\mathcal{H}$ into a group $\mathcal{H}^{\prime}$.
If the set on which $f$ is defined is connected, then it is possible
to extend $f$ to a homomorphism $\overline{f}:\mathcal{H}\rightarrow\mathcal{H}^{\prime}$.

\end{fact}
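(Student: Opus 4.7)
The plan is to build $\overline{f}$ by a monodromy-type construction along paths. Given $h\in\mathcal{H}$, pick a continuous path $\gamma\colon[0,1]\to\mathcal{H}$ with $\gamma(0)=e$ and $\gamma(1)=h$. Before using $\gamma$, replace $U$ by a symmetric connected subneighborhood $V\subseteq U$ of the identity with $V\cdot V\subseteq U$ (this exists by continuity of multiplication and inversion). By compactness of $[0,1]$ and continuity of $\gamma$, pick a partition $0=t_0<t_1<\cdots<t_n=1$ fine enough that each \emph{step} $s_i:=\gamma(t_{i-1})^{-1}\gamma(t_i)$ lies in $V$. Then define
\[
  \overline{f}(h)\;=\;f(s_1)\,f(s_2)\cdots f(s_n).
\]

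The first thing to verify is that this value does not depend on the partition. Inserting a single extra point $t'$ between $t_{i-1}$ and $t_i$ replaces $f(s_i)$ by $f(a)\,f(b)$, where $a,b\in V$ and $ab=s_i\in U$; the defining relation of a local homomorphism (valid because $a,b,ab$ all lie in $U$) gives $f(a)f(b)=f(ab)=f(s_i)$. Since any two partitions have a common refinement, partition invariance follows.

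The heart of the argument is path independence. Given two paths $\gamma_0,\gamma_1$ from $e$ to $h$, use simple connectedness of $\mathcal{H}$ to choose a homotopy $H\colon[0,1]^2\to\mathcal{H}$ between them rel endpoints. Uniform continuity of $H$, combined with continuity of multiplication and inversion, allows a grid subdivision of $[0,1]^2$ so fine that for every cell $C$ all the elements $H(s,t)^{-1}H(s',t')$ with $(s,t),(s',t')\in C$ lie in $V$. A cell-by-cell induction shows that sliding the broken path across a single cell does not change the associated product, because within a single cell all the consecutive quotients fit inside $V\cdot V\subseteq U$ and the local homomorphism identity again applies. Iterating across all cells, the product computed from $\gamma_0$ equals that computed from $\gamma_1$; this is the step I expect to be the main obstacle, since one has to be careful to keep all intermediate quotients inside $U$ throughout the sliding.

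Once $\overline{f}$ is well defined, the homomorphism property is straightforward: given $h_1,h_2\in\mathcal{H}$ with paths $\gamma_1,\gamma_2$ from $e$ to $h_1,h_2$, concatenate $\gamma_1$ with the translated path $t\mapsto h_1\,\gamma_2(t)$ from $h_1$ to $h_1h_2$; the partition of this concatenated path splits the product into the two subproducts $\overline{f}(h_1)\,\overline{f}(h_2)$, giving $\overline{f}(h_1h_2)=\overline{f}(h_1)\overline{f}(h_2)$. Finally, $\overline{f}$ extends $f$: for $h\in U$, connectedness of $U$ gives a path in $U$ from $e$ to $h$, and the one-step partition $0<1$ shows $\overline{f}(h)=f(h)$.
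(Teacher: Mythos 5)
Your proposal is the standard monodromy argument and is essentially the paper's own approach: the paper quotes this Fact from Chevalley without proof, but its proof of Theorem \ref{T:ExtofdefLocHomoSCGp} carries out exactly this path-subdivision and homotopy-grid construction (adapted to the locally definable category, where compactness of $[0,1]$ is replaced by saturation and genericity of $V$). One small point of hygiene: for refinement invariance you need the partition chosen so that each whole arc $\gamma([t_{i-1},t_i])$ lies in a single translate $\gamma(t_{i-1})V$ --- so that \emph{all} intermediate quotients, not just the endpoint quotient $s_i$, lie in $V$ --- which is what compactness actually delivers and what the paper arranges via the condition $\omega([t_i,t_{i+1}])\subseteq a_iV$.
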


Again the above example of $\left(R,+\right)$ and $G=\left(\left[0,1\right),+_{mod\,1}\right)$
with the definable local isomorphism $f:(-\frac{1}{2},\frac{1}{2})\subseteq\left(R,+\right)\rightarrow G$,
$f\left(t\right)=t\,mod\,1$ shows that $f$ cannot be extended
to a locally definable homomorphism from $\left(R,+\right)$ into $G$
(otherwise, the kernel, $\mathbb{Z}$, of such a (locally) definable
homomorphism would be definable in $\mathcal{M}$, which is not possible).
Therefore, Fact \ref{F:ExtLocHomoSCGp} does not hold in the category
of locally definable groups.

Nevertheless, we were able to formulate a sufficient condition for a definable local homomorphism to extend to a locally definable homomorphism of the whole group. More precisely, we prove the following.

\begin{customthm}{\ref{T:ExtofdefLocHomoSCGp}}
\textit{Let $\mathcal{G}$ and $\mathcal{G}^{\prime}$ be locally
definable groups, $U$ a definably connected definable neighborhood
of the identity $e$ of $\mathcal{G}$, and $f:U\subseteq\mathcal{G}\rightarrow\mathcal{G}^{\prime}$
a definable local homomorphism. Assume that there is a definable neighborhood $V$ of $e$ generic in $\mathcal{G}$ such that $V^{-1}V\subseteq U$.
If $\mathcal{G}$ is simply connected, then $f$ is uniquely extendable
to a locally definable group homomorphism $\overline{f}:\mathcal{G}\rightarrow\mathcal{G}^{\prime}$.}
\end{customthm}

Above a subset $X$ of a group $\mathcal{G}$, locally definable
in a $\kappa$-saturated o-minimal structure, is
\textit{left (right) generic} in $\mathcal{G}$ if less than $\kappa$-many
left (right) group translates of $X$ cover $\mathcal{G}$; i.e.,
$\mathcal{G}=AX$ ($\mathcal{G}=XA$) for some $A\subseteq\mathcal{G}$
with $\left|A\right|<\kappa$. $X$ is \textit{generic} if it is
both left and right generic. A locally definable group may not have
definable generic subsets; however, when it does, the group has interesting properties, see for example \cite[Thm. 3.9]{PPant12}.

Theorem \ref{T:ExtofdefLocHomoSCGp} allows us to easily prove Corollary
\ref{C:ExtlocHomLDGpsTFcase}, a result on extension of a definable
local homomorphism between abelian locally definable groups that we
have previously proved in \cite[Thm. 9.1]{BADefCompIJM} using different methods.

\textit{From now on until the end of this paper, let $R=\left(R,<,+,\cdot\right)$
be a sufficiently saturated real closed field, and denote by $R_{a}$
its additive group $\left(R.+\right)$ and by $R_{m}$ its multiplicative
group of positive elements $\left(R^{>0},\cdot\right)$.}

Corollary \ref{C:ExtlocHomLDGpsTFcase} was applied in \cite{BADefCompIJM}
to prove a characterization of the o-minimal universal covering group $\widetilde{G}$ of an abelian definably connected definably compact
semialgebraic group $G$ over $R$ in terms of $R$-algebraic groups \cite[Thm. 10.2]{BADefCompIJM}
(see Fact \ref{F:AbelianlocHomLDGpsHaveisosubgintheirUniveCovers}).

In Section \ref{SS:UniveCoverCompactGeneral}, we show that Theorem
10.2 in \cite{BADefCompIJM} also holds for any definably
connected definably compact semialgebraic group over $R$ not necessarily
abelian (Thm. \ref{T:UniveCoverCompactGeneral}).

\begin{customthm}{\ref{T:UniveCoverCompactGeneral}}
\textit{Let $G$ be a definably connected definably compact group
definable in $R$. Then $\widetilde{G}$ is an open locally definable
subgroup of $\widetilde{H\left(R\right)^{0}}$ for some $R$-algebraic
group $H$.}
\end{customthm}

By \cite{PS05}, every abelian torsion free semialgebraic group over
$R$ is definably isomorphic to $R_{a}^{k}\times R_{m}^{n}$ for some
$k,n\in\mathbb{N}$. Therefore, the results for torsion free and definably compact semialgebraic groups over $R$ suggest asking ourselves the following.

\begin{question}\label{Q:OnAbelianSemiAlgUnivCovSubgpsAlgGps}
Let $G$ be an abelian definably connected semialgebraic group over
$R$. Is $\widetilde{G}$ an open locally definable subgroup of $\widetilde{H\left(R\right)^{0}}$ for some $R$-algebraic group $H$?
\end{question}

Although the above question remains open, we were able to prove:

\begin{customthm}{\ref{T:UniveCoverAbelianGeneral}}
\textit{Let $G$ be a definably connected semialgebraic group over $R$. Then there exist a locally definable group $\mathcal{W}$, commutative $R$-algebraic groups $H_{1}$, $H_{2}$ such that $\widetilde{G}$ is a locally definable extension of $\mathcal{W}$ by $H_{2}\left(R\right)^{0}$ where $\mathcal{W}$ is an open subgroup of $\widetilde{H_{1}\left(R\right)^{0}}$. In fact, $H_{2}\left(R\right)^{0}$ is isomorphic to $R_{a}^{k}\times R_{m}^{n}$ as definable groups for some
$k,n\in\mathbb{N}$.}
\end{customthm}

Where we say that a locally definable group $\mathcal{G}$ is a\textit{
locally definable extension} of $\mathcal{G}^{\prime}$ by $\mathcal{G}^{\prime\prime}$
if we have an exact sequence $1\rightarrow\mathcal{G}^{\prime\prime}\rightarrow\mathcal{G}\rightarrow\mathcal{G}^{\prime}\rightarrow1$
in the category of locally definable groups with locally definable
homomorphisms \cite[Section 4]{Ed06}.

\section{An extension of a definable local homomorphism between locally definable groups}\label{S:ExtLocHomomoSCLDGp}

Recall that we are working in a sufficiently $\kappa$-saturated o-minimal
structure $\mathcal{M}$.

\begin{thm}\label{T:ExtofdefLocHomoSCGp}
Let $\mathcal{G}$ and $\mathcal{G}^{\prime}$ be locally definable
groups, $U$ a definably connected definable neighborhood of the identity
$e$ of $\mathcal{G}$, and $f:U\subseteq\mathcal{G}\rightarrow\mathcal{G}^{\prime}$
a definable local homomorphism. Assume that there is a definable neighborhood $V$ of $e$, generic in $\mathcal{G}$, such that $V^{-1}V\subseteq U$. If $\mathcal{G}$ is simply connected, then $f$ is uniquely extendable
to a locally definable group homomorphism $\overline{f}:\mathcal{G}\rightarrow\mathcal{G}^{\prime}$.
\end{thm}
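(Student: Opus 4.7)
The plan is to mimic Chevalley's proof of Fact~\ref{F:ExtLocHomoSCGp} in the locally definable setting, with the genericity of $V$ being used precisely to secure local definability of the extension.

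For every $g\in\mathcal{G}$, definable path-connectedness gives a definable path $\gamma\colon[0,1]\to\mathcal{G}$ with $\gamma(0)=e$ and $\gamma(1)=g$. The definable open set $W=\{(s,t):\gamma(s)^{-1}\gamma(t)\in V\}\subseteq[0,1]^{2}$ contains the diagonal, so o-minimal Heine--Borel on the definably compact interval $[0,1]$ yields a partition $0=t_{0}<\cdots<t_{n}=1$ with $\gamma(t_{k})^{-1}\gamma(t_{k+1})\in V$ for all $k$. Since $e\in V$, one has $V\subseteq V^{-1}V\subseteq U$, so I can set
\[
\overline{f}_{\gamma}(g):=\prod_{k=0}^{n-1}f\bigl(\gamma(t_{k})^{-1}\gamma(t_{k+1})\bigr).
\]

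Independence of the partition is automatic: inserting a point $s\in(t_{k},t_{k+1})$ replaces the corresponding factor by $f(\gamma(t_{k})^{-1}\gamma(s))\cdot f(\gamma(s)^{-1}\gamma(t_{k+1}))$, and the local-homomorphism identity $f(hh')=f(h)f(h')$ applies since all three arguments lie in $V\subseteq U$. For path independence, given two such paths $\gamma_{0},\gamma_{1}$, simple connectedness supplies a definable homotopy $H\colon[0,1]^{2}\to\mathcal{G}$ between them; another Heine--Borel argument grids $[0,1]^{2}$ into cells on each of which $H(p)^{-1}H(q)\in V^{-1}V\subseteq U$ for all $p,q$ in the cell---this is where the hypothesis $V^{-1}V\subseteq U$ is indispensable---and a rectangle-by-rectangle calculation collapses the product along $\gamma_{1}$ to that along $\gamma_{0}$. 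The same construction applied to a path lying inside the definably connected set $U$ gives $\overline{f}|_{U}=f$, while concatenation of paths yields that $\overline{f}$ is a group homomorphism. Uniqueness is immediate because $V$, being an identity neighborhood in the connected group $\mathcal{G}$, generates $\mathcal{G}$.

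The step I expect to be most delicate is local definability of $\overline{f}$, which is exactly where the genericity of $V$ is used. Writing $\mathcal{G}=AV$ with $|A|<\kappa$, for any definable $X\subseteq\mathcal{G}$ the partial type $\{x\in X\}\cup\{x\notin aV:a\in A\}$ has fewer than $\kappa$ formulas but is inconsistent, so $\kappa$-saturation produces a finite subcover $X\subseteq\bigcup_{i=1}^{N}a_{i}V$. On each piece the homomorphism property gives $\overline{f}(a_{i}v)=\overline{f}(a_{i})\cdot f(v)$, which is definable in $v\in V$, and patching these finitely many pieces makes $\overline{f}|_{X}$ definable.
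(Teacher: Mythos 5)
Your overall architecture is the same as the paper's: Chevalley-style products over a subdivision of a path, refinement- and path-independence via a homotopy grid, concatenation for the homomorphism property, and finite covers by translates of $V$ for local definability. But there is a genuine gap in the step you lean on twice, the ``o-minimal Heine--Borel'' extraction of a finite partition of $[0,1]$ (and of a finite grid on $[0,1]^{2}$). Over a sufficiently saturated o-minimal structure the interval $[0,1]$ is definably compact but \emph{not} compact: definable open covers need not admit finite subcovers, and the Lebesgue number of your cover can be a positive infinitesimal, in which case no finite partition has small enough mesh. Concretely, in $\mathcal{G}=(R,+)$ with $\gamma(t)=t$ and $V=(-\varepsilon,\varepsilon)$ for $\varepsilon$ infinitesimal, your set $W=\{(s,t):|s-t|<\varepsilon\}$ is definable, open, and contains the diagonal, yet no finite partition $0=t_{0}<\dots<t_{n}=1$ satisfies $t_{k+1}-t_{k}<\varepsilon$. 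This is exactly where the genericity of $V$ must enter, not merely in the local-definability step: since $\mathcal{G}=A\cdot V$ with $|A|<\kappa$ and $\gamma(I)$ is definable, $\kappa$-saturation yields a finite $A_{0}$ with $\gamma(I)\subseteq A_{0}\cdot V$; each $\gamma^{-1}(aV)$ is a finite union of intervals by o-minimality, and from this one extracts a finite partition with $\gamma([t_{k},t_{k+1}])\subseteq a_{k}V$, so that consecutive increments lie in $V^{-1}V\subseteq U$ (which is also why the hypothesis is $V^{-1}V\subseteq U$ rather than just $V\subseteq U$). The same repair is needed for the homotopy square. You already carry out precisely this saturation argument in your final paragraph, so the fix is to move it to the front of the proof, as the paper does.

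A second, smaller error: your justification for uniqueness --- that $V$, being an identity neighborhood in the connected group $\mathcal{G}$, generates $\mathcal{G}$ --- is false for locally definable groups. The paper's own introductory example shows this: $(R,+)$ is connected and $V=(-\tfrac{1}{2},\tfrac{1}{2})$ is an open identity neighborhood, yet $\langle V\rangle=\bigcup_{n}(-n,n)\subsetneq R$. Generation does hold in your setting, but because $V$ is \emph{generic}, which is the justification the paper gives via \cite[Fact 2.3(2)]{PPant12}.
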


\begin{proof}

Let $x\in\mathcal{G}$. Since $\mathcal{G}$ is path connected, there
is a locally definable path $\omega:I=[0,1]\rightarrow\mathcal{G}$
such that $\omega\left(0\right)=e$, $\omega\left(1\right)=x$. Note that since $V$ is generic in $\mathcal{G}$, then the topological interior of $V$ is also generic in $\mathcal{G}$ -- this fact might be followed from Lemma 2.22 of \cite{MarikovaJana}, for example -- . So replacing $V$ with its topological interior, we now have that $V$ is an open neighborhood of $e$ and generic in $\mathcal{G}$. By the genericity of $V$ in $\mathcal{G}$, $\mathcal{G}=A\cdot V$ for
some $A\subseteq\mathcal{G}$, $\left|A\right|<\kappa$. As $\omega\left(I\right)$
is a definable subset of $A\cdot V$, saturation yields that there
is a finite set $A_{0}\subseteq A$ such that $\omega\left(I\right)\subseteq A_{0}\cdot V$.
Then, $I=\bigcup_{a\in A_{0}}\omega^{-1}\left(a\cdot V\right)$. As
$V$ is an open neighborhood, $\omega^{-1}\left(a\cdot V\right)$
is also an open neighborhood of some element of $I$.

Since $\mathcal{M}$ is an o-minimal structure, $\omega^{-1}\left(a\cdot V\right)$
is a finite union of points and intervals. Then $\left\{ \omega^{-1}\left(a\cdot V\right):a\in A_{0}\right\} $
is a collection of open intervals in $I$. Thus we can choose a division
of $I$ into subintervals $\left[t_{i},t_{i+1}\right]$ such that
$0=t_{0}<t_{1}<\ldots<t_{n}=1$ and $\left(\omega\left[t_{i},t_{i+1}\right]\right)\subseteq a_{i}V$
for some $a_{i}\in A_{0}$. So, for $t,t^{\prime}\in\left[t_{i},t_{i+1}\right]$,
$\omega\left(t\right)=a_{i}v$, $\omega\left(t^{\prime}\right)=a_{i}v^{\prime}$
for $v,v^{\prime}\in V$, and $\omega\left(t\right)^{-1}\omega\left(t^{\prime}\right)=v^{-1}v^{\prime}\in V^{-1}V\subseteq U$.

For such a locally definable path $\omega$ and division define

\[
\overline{f}_{\omega}\left(x\right)\coloneqq f\left(\omega\left(t_{0}\right)^{-1}\omega\left(t_{1}\right)\right)f\left(\omega\left(t_{1}\right)^{-1}\omega\left(t_{2}\right)\right)\cdots f\left(\omega\left(t_{n-1}\right)^{-1}\omega\left(t_{n}\right)\right).
\]

Now, we will show that $\overline{f}_{\omega}$ is invariant under
refinements of the division of $I$.

Let $t^{\prime}\in\left[t_{i},t_{i+1}\right]$ be a new subdivision point. Since $\omega\left(t_{i}\right)^{-1}\omega\left(t^{\prime}\right),\omega\left(t^{\prime}\right)^{-1}\omega\left(t_{i+1}\right)\in U$
and $f$ is a local homomorphism, $f\left(\omega\left(t_{i}\right)^{-1}\omega\left(t^{\prime}\right)\right)f\left(\omega\left(t^{\prime}\right)^{-1}\omega\left(t_{i+1}\right)\right)=f\left(\omega\left(t_{i}\right)^{-1}\omega\left(t_{i+1}\right)\right)$.

Hence, given two subdivisions of $I$, we can consider a refinement
common to these, then $\overline{f}_{\omega}$ does not depend on
the subdivisions of $I$.

Now, we will show that $\overline{f}_{\omega}$ is determined independently
of the choice of a path $\omega$. Let $\omega^{\prime}:I\rightarrow\mathcal{G}$
be another locally definable path connecting $e$ and $x$. Since
$\mathcal{G}$ is simply connected, there is a locally definable homotopy
$\Gamma:I\times I\rightarrow\mathcal{G}$ between $\omega$ and $\omega^{\prime}$
with $\Gamma\left(t,0\right)=\omega\left(t\right)$ and $\Gamma\left(t,1\right)=\omega^{\prime}\left(t\right)$.
As $\Gamma\left(I\times I\right)$ is a definable subset of $A\cdot V$,
again saturation implies that there is a finite set $A_{1}\subseteq A$
such that $\Gamma\left(I\times I\right)\subseteq A_{1}\cdot V$. So,
$I\times I\subseteq\bigcup_{a\in A_{1}}\Gamma^{-1}\left(a\cdot V\right)$.
By continuity of $\varGamma$, for every $\left(t,t^{\prime}\right)\in I\times I$,
there is $I_{i}\times I_{j}\subseteq I\times I$ such that $\left(t,t^{\prime}\right)\in I_{i}\times I_{j}$,
$\Gamma\left(I_{i}\times I_{j}\right)\subseteq a_{i,j}V$ for some
$a_{i,j}\in A_{1}$. Therefore, we can partition $I$ in a finite
number of subintervals $I_{i}$ such that $\Gamma\left(I_{i}\times I_{j}\right)\subseteq a_{i,j}V$
for some $a_{i,j}\in A_{1}$. As the subintervals $I_{i}$ are finite
and cover $I$, we may assume that $I_{i}=\left[\frac{i}{n},\frac{i+1}{n}\right]$
for some $n\in\mathbb{N}$. Note that if $\left(s,t\right),\left(s^{\prime},t^{\prime}\right)\in I_{i}\times I_{j}$,
then $\varGamma\left(s,t\right)^{-1}\varGamma\left(s^{\prime},t^{\prime}\right)=\left(a_{i,j}v\right)^{-1}a_{i,j}v^{\prime}=v^{-1}v^{\prime}\in U$.
Let $\omega_{i}\left(t\right)\coloneqq\varGamma\left(t,\frac{i}{n}\right)$
for $i\in\left\{ 0,1,\ldots,n\right\} $. So $\omega_{0}\left(t\right)=\omega\left(t\right)$,
$\omega_{n}\left(t\right)=\omega^{\prime}\left(t\right)$. Since $f$
is a local homomorphism, then $\overline{f}_{\omega_{i}}\left(x\right)=\overline{f}_{\omega_{i+1}}\left(x\right)$
for $i\in\left\{ 0,1,\ldots,n-1\right\} $. Therefore, $\overline{f}_{\omega}$
is determined independently of the choice of a path $\omega$, and
denote it by $\overline{f}$ .

Now, let $x,y\in\mathcal{G}$ and $\omega,\gamma:I\rightarrow\mathcal{G}$
locally definable paths connecting $e$ and $x$, and $e$ and $y$,
respectively. Then the locally definable path $\sigma:I\rightarrow\mathcal{G}$, $\sigma\left(t\right)\coloneqq x\gamma\left(t\right)$ connects
$x$ with $xy$. Let $\omega*\sigma$ denote the concatenation of
the paths $\omega$ and $\sigma$. Then, $\overline{f}_{\omega}\left(x\right)\overline{f}_{\gamma}\left(y\right)=\overline{f}_{\omega*\sigma}\left(xy\right)$,
namely $\overline{f}\left(x\right)\overline{f}\left(y\right)=\overline{f}\left(xy\right)$,
so $\overline{f}$ is a group homomorphism.

Next, we will see that $\overline{f}$ is an extension of $f$. As
$U$ is definably connected, so is path connected \cite{BarOter},
then if $x\in U$, there is a locally definable path $\omega:I\rightarrow\mathcal{G}$
such that $\omega\left(0\right)=e$, $\omega\left(1\right)=x$, and
$\omega\left(I\right)\subseteq U$. As $\overline{f}$ does not depend
on the subdivisions of $I$, let $t_{0}=0$, $t_{1}=1$, then it is
clear that $\overline{f}\left(x\right)=f\left(\omega\left(t_{0}\right)^{-1}\omega\left(t_{1}\right)\right)=f\left(x\right)$.

Now, let $h$ be another extension of $f$. Let $\overline{\mathcal{G}}=\left\{ g\in\mathcal{G}:h\left(g\right)=\overline{f}\left(g\right)\right\} $.
Then $\overline{\mathcal{G}}$ is a locally definable subgroup of
$\mathcal{G}$, and $U\subseteq\overline{\mathcal{G}}$. As $U$ is
generic in $\mathcal{G}$, $U$ generates $\mathcal{G}$ \cite[Fact 2.3(2)]{PPant12},
then $\mathcal{G}\subseteq\overline{\mathcal{G}}$, so $h=\overline{f}$.
Therefore, $f$ is uniquely extendable to a locally definable group
homomorphism from $\mathcal{G}$ into $\mathcal{G}^{\prime}$.

Finally, observe that $\overline{f}$ is a locally definable map on
$\mathcal{G}$. For this note that since $\overline{f}$ is a group
homomorphism, $\overline{f}\left(x^{-1}y\right)=\overline{f}\left(x\right)^{-1}\overline{f}\left(y\right)=f\left(x\right)^{-1}f\left(y\right)$
for every $x,y\in U$, then $\overline{f}$ restricted to $\prod_{n}U^{-1}U$
is a definable map. And as $\mathcal{G}=\left\langle U\right\rangle =\bigcup_{n\in\mathbb{N}}\prod_{n}U^{-1}U$,
then $\overline{f}$ is a locally definable map on $\mathcal{G}$.
\end{proof}

For a locally definable group $\mathcal{G}$ in $\mathcal{M}$, we
denote by $\mathcal{G}^{00}$ the smallest, if such exists, type-definable
subgroup of $\mathcal{G}$ of index smaller than $\kappa$, where
for a small set we mean a subset of $M^{n}$ with cardinality smaller
than $\kappa$ (\cite{HPePiNIP}). $\mathcal{G}^{00}$ may not exist
(see an example in \cite[Subsection 2.2]{PPant12}). For definable groups such a type-definable group always exists \cite{Shelah}.

With Theorem \ref{T:ExtofdefLocHomoSCGp}, it is easy to prove the
following result, which was previously demonstrated in \cite{BADefCompIJM}
using a different technique.

\begin{cor}\label{C:ExtlocHomLDGpsTFcase} (\cite[Thm. 9.1]{BADefCompIJM})
Let $\mathcal{G}$ and $\mathcal{G}^{\prime}$ be two abelian locally
definable groups such that $\mathcal{G}$ is connected, torsion free,
and $\mathcal{G}^{00}$ exists. Let $U\subseteq\mathcal{G}$ be a
definably connected definable set such that $\mathcal{G}^{00}\subseteq U$,
and $f:U\subseteq\mathcal{G}\rightarrow\mathcal{G}^{\prime}$ a definable
local homomorphism. Then $f$ is uniquely extendable to a locally
definable group homomorphism $\overline{f}:\mathcal{G}\rightarrow\mathcal{G}^{\prime}$.
\end{cor}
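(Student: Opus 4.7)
The plan is to verify the hypotheses of Theorem \ref{T:ExtofdefLocHomoSCGp} for $\mathcal{G},\mathcal{G}',U,f$ and then invoke it directly. Beyond what the corollary already assumes, two conditions remain to be checked: (a) there is a definable neighborhood $V$ of $e$, generic in $\mathcal{G}$, with $V^{-1}V\subseteq U$; and (b) $\mathcal{G}$ is simply connected.

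For (a), I would use the type-definability of $\mathcal{G}^{00}$ together with $\kappa$-saturation. Write $\mathcal{G}^{00}=\bigcap_{i\in I}X_{i}$ as a filtered intersection of definable sets. Since $\mathcal{G}^{00}$ is a subgroup contained in $U$, the (definable) condition $x^{-1}y\in U$ holds for every $(x,y)\in\mathcal{G}^{00}\times\mathcal{G}^{00}$, so by $\kappa$-saturation there exists a definable $V\supseteq\mathcal{G}^{00}$ with $V^{-1}V\subseteq U$. Because $\mathcal{G}^{00}$ has index less than $\kappa$ in $\mathcal{G}$, any definable set containing it is covered by fewer than $\kappa$ of its translates, so $V$ is generic. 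If necessary, pass to the topological interior of $V$ to ensure it is an open neighborhood of $e$, arguing as in the opening paragraph of the proof of Theorem \ref{T:ExtofdefLocHomoSCGp} (via the analogue of \cite[Lemma 2.22]{MarikovaJana}). For (b), I would invoke the structure theory of connected torsion-free abelian locally definable groups in o-minimal structures, for which the o-minimal fundamental group vanishes; the relevant statements are in \cite{EdPan}, and the torsion-free abelian case is discussed in detail in \cite{BADefCompIJM}.

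Once (a) and (b) are in place, Theorem \ref{T:ExtofdefLocHomoSCGp} immediately yields the unique locally definable homomorphism $\overline{f}:\mathcal{G}\rightarrow\mathcal{G}^{\prime}$ extending $f$. The main obstacle is (b): establishing simple connectedness in the genuinely locally definable (not merely definable) torsion-free abelian setting, where the Peterzil--Starchenko classification $R_{a}^{k}\times R_{m}^{n}$ does not literally apply and one must lean on the o-minimal universal covering machinery. The saturation argument for (a), by contrast, is entirely routine.
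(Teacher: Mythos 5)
Your overall strategy is exactly the paper's: reduce to Theorem \ref{T:ExtofdefLocHomoSCGp} by producing a generic definable neighborhood $V$ with $V^{-1}V\subseteq U$ and by establishing simple connectedness of $\mathcal{G}$. Your part (a) is correct and essentially identical to the paper's argument: saturation applied to the type-definable set $\mathcal{G}^{00}$ inside the definable condition $x^{-1}y\in U$ gives a definable $V$ with $\mathcal{G}^{00}\subseteq V$ and $V^{-1}V\subseteq U$, genericity follows from the small index of $\mathcal{G}^{00}$, and passing to the interior (the paper does this here using that $\mathcal{G}^{00}$ is open, rather than deferring to the interior step inside Theorem \ref{T:ExtofdefLocHomoSCGp}) is harmless.

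The gap is in (b), which you yourself flag as the main obstacle but then resolve only by a vague appeal to ``structure theory'' of connected torsion-free abelian locally definable groups. There is no general theorem that such groups have trivial o-minimal fundamental group, and the paper does not claim one; connectedness plus torsion-freeness alone is not the hypothesis under which simple connectedness is known. The missing idea is that the existence of $\mathcal{G}^{00}$ is used a \emph{second} time, not just for genericity: $\mathcal{G}^{00}$ exists implies $\mathcal{G}$ is definably generated, and then \cite[Thm.\ 3.9]{PPant12} shows that $\mathcal{G}$ covers an abelian \emph{definable} group. Only at that point does \cite[Claim 6.4]{BADefCompIJM} apply, giving simple connectedness from torsion-freeness together with the fact that $\mathcal{G}$ covers a definable group. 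Your sketch drops the hypothesis ``$\mathcal{G}^{00}$ exists'' from the simple-connectedness argument entirely, so as written the chain of citations you propose does not close; you need the intermediate step through Peterzil--Pillay's theorem on definably generated groups with $\mathcal{G}^{00}$.
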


\begin{proof}

We will just check that the assumptions are enough to apply Theorem
\ref{T:ExtofdefLocHomoSCGp}. First, we will see that $\mathcal{G}$
is simply connected. Since $\mathcal{G}^{00}$ exists, then $\mathcal{G}$
is definably generated, and by \cite[Thm 3.9]{PPant12}, $\mathcal{G}$
covers an abelian definable group. Claim 6.4 in \cite{BADefCompIJM}
yields that $\mathcal{G}$ is simply connected since $\mathcal{G}$
is also torsion free. Now, as $\mathcal{G}^{00}\subseteq U$, by saturation,
there is a definable set $\overline{V}$ such that $\mathcal{G}^{00}\subseteq\overline{V}\subseteq\overline{V}^{-1}\overline{V}\subseteq U$.
Let $V$ be the topological interior of $\overline{V}$ in $\mathcal{G}$,
which is definable, then $\mathcal{G}^{00}\subseteq V\subseteq V^{-1}V\subseteq U$
since $\mathcal{G}^{00}$ is open in $\mathcal{G}$. Therefore, $V$
is a definable neighborhood of the identity in $\mathcal{G}$ open
in $\mathcal{G}$, generic in $\mathcal{G}$, and $V^{-1}V\subseteq U$.
Finally, Theorem \ref{T:ExtofdefLocHomoSCGp} implies that there is
a unique extension $\overline{f}:\mathcal{G}\rightarrow\mathcal{G}^{\prime}$
$f$ that is a locally definable group homomorphism.
\end{proof}

\section{Universal covers of definably local homomorphic locally definable groups}\label{S:UnivCovOfLocHomomoLDGps}

As in \cite{BADefCompIJM}, Corollary \ref{C:ExtlocHomLDGpsTFcase},
together with other results in \cite{BADefCompIJM}, implies the following
fact, and establishes a relation between the o-minimal universal
covering groups of two definably locally homomorphic locally definable groups. This result could be interpreted as an analogue
in the category of locally definable groups of the known fact that two connected locally homomorphic Lie groups have isomorphic universal covering Lie groups (Fact \ref{F:LieGpsLocIsoUCovGlobIso}).

\begin{fact}(\cite[Thm. 10.1]{BADefCompIJM})\label{F:AbelianlocHomLDGpsHaveisosubgintheirUniveCovers}
Let $\mathcal{G}$ and $\mathcal{G}^{\prime}$ be two divisible abelian
connected locally definable groups such that $\mathcal{G}^{00}$ exists
and $\mathcal{G}^{00}$ is a decreasing intersection of $\omega$-many
simply connected definable subsets of $\mathcal{G}$.

Let $X\subseteq\mathcal{G}$ be a definable set with $\mathcal{G}^{00}\subseteq X$,
and $f:X\subseteq\mathcal{G}\rightarrow\mathcal{G}^{\prime}$ a definable
homeomorphism and local homomorphism. Then $\widetilde{\mathcal{G}}$
is an open locally definable subgroup of $\widetilde{\mathcal{G}^{\prime}}$.

\end{fact}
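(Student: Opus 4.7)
The plan is to leverage Corollary \ref{C:ExtlocHomLDGpsTFcase} to extend the local homomorphism $f$ globally, then lift that extension to a locally definable homomorphism between universal covers and check that this lift is an open embedding. First I would verify the hypotheses of Corollary \ref{C:ExtlocHomLDGpsTFcase}: $\mathcal{G}$ is a connected abelian locally definable group with $\mathcal{G}^{00}$ existing, and $X$ is a definable set with $\mathcal{G}^{00}\subseteq X$. Divisibility together with the hypothesis that $\mathcal{G}^{00}$ is a decreasing intersection of $\omega$-many simply connected definable subsets should furnish torsion-freeness of $\mathcal{G}$ via the results of \cite{BADefCompIJM} (in the spirit of the application of Claim 6.4 therein in the proof of Corollary \ref{C:ExtlocHomLDGpsTFcase}). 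Applying the Corollary then yields a unique locally definable group homomorphism $\overline{f}\colon\mathcal{G}\to\mathcal{G}^{\prime}$ extending $f$.

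The second step is to promote $\overline{f}$ to a map between universal covers. Writing $p\colon\widetilde{\mathcal{G}}\to\mathcal{G}$ and $p^{\prime}\colon\widetilde{\mathcal{G}^{\prime}}\to\mathcal{G}^{\prime}$ for the covering maps, one forms $\overline{f}\circ p\colon\widetilde{\mathcal{G}}\to\mathcal{G}^{\prime}$. Because $\widetilde{\mathcal{G}}$ is simply connected, a short path-lifting argument (or a second application of Theorem \ref{T:ExtofdefLocHomoSCGp} after first lifting $\overline{f}$ on a definable neighborhood of the identity evenly covered by $p^{\prime}$) produces a unique locally definable homomorphism $\widetilde{f}\colon\widetilde{\mathcal{G}}\to\widetilde{\mathcal{G}^{\prime}}$ with $\widetilde{f}(\widetilde{e})=\widetilde{e^{\prime}}$ and $p^{\prime}\circ\widetilde{f}=\overline{f}\circ p$. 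The genericity hypothesis needed for Theorem \ref{T:ExtofdefLocHomoSCGp} transfers to $\widetilde{\mathcal{G}}$ because $p$ is a local homeomorphism and $\mathcal{G}^{00}$ lifts into $\widetilde{\mathcal{G}}$.

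The third step is to show that $\widetilde{f}$ is an open embedding, whence $\widetilde{f}(\widetilde{\mathcal{G}})$ realizes $\widetilde{\mathcal{G}}$ as an open locally definable subgroup of $\widetilde{\mathcal{G}^{\prime}}$. Openness at the identity is immediate: $f$ is a definable homeomorphism on $X$, and both $p$ and $p^{\prime}$ are local homeomorphisms, so $\widetilde{f}$ restricted to a sufficiently small definable neighborhood of $\widetilde{e}$ is a homeomorphism onto a definable neighborhood of $\widetilde{e^{\prime}}$; translating by group elements and using that $\widetilde{f}$ is a homomorphism extends this to every point. For injectivity, I would symmetrize the construction: applying the same extension-and-lift procedure to the definable local homomorphism $f^{-1}\colon f(V)\to V\subseteq \mathcal{G}$ (where $V\subseteq X$ is a definable open generic neighborhood of $e$ with $V^{-1}V\subseteq X$, produced as in the proof of Corollary \ref{C:ExtlocHomLDGpsTFcase}) should yield a locally definable homomorphism from an open subgroup of $\widetilde{\mathcal{G}^{\prime}}$ into $\widetilde{\mathcal{G}}$ that agrees with a local inverse of $\widetilde{f}$ near the identity, and then the uniqueness clauses of Theorem \ref{T:ExtofdefLocHomoSCGp} would force both compositions to be identities on their respective domains.

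The main obstacle I anticipate is precisely this symmetric step: to invoke the extension machinery on $f^{-1}$ one must transport the structural hypotheses (existence and shape of $\mathcal{G}^{00}$, torsion-freeness, divisibility) from $\mathcal{G}$ to the relevant open subgroup of $\widetilde{\mathcal{G}^{\prime}}$, which is not immediate from the statement. This is where the external results from \cite{BADefCompIJM} appear indispensable: in particular, one likely needs that $f(\mathcal{G}^{00})$ is a type-definable subgroup of $\mathcal{G}^{\prime}$ of bounded index whose lift into $\widetilde{\mathcal{G}^{\prime}}$ inherits the required topological features, so that the injectivity argument can close.
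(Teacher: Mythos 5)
First, a caveat: the paper does not prove this statement --- it is quoted as a Fact from \cite[Thm.\ 10.1]{BADefCompIJM}, with only the remark that Corollary \ref{C:ExtlocHomLDGpsTFcase} ``together with other results in \cite{BADefCompIJM}'' implies it. So there is no in-paper proof to compare against, and your proposal must be judged on its own merits.

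Judged that way, your first step contains a genuine gap. Corollary \ref{C:ExtlocHomLDGpsTFcase} requires $\mathcal{G}$ to be \emph{torsion free}, and this neither appears among the hypotheses of the Fact nor follows from them: divisibility plus the assumption that $\mathcal{G}^{00}$ is a decreasing intersection of simply connected definable sets does not force torsion-freeness. Indeed, in the paper's main application of this Fact (Fact \ref{F:UniveCoverAbelianCompact}, via \cite[Thm.\ 4.1]{BADefCompIJM}), $\mathcal{G}=G$ is an abelian definably \emph{compact} group, which has plenty of torsion. For such a $\mathcal{G}$ the global extension $\overline{f}\colon\mathcal{G}\to\mathcal{G}^{\prime}$ you posit generally cannot exist: a homomorphism into a torsion-free-near-the-identity target must kill torsion elements accumulating at $e$, contradicting that $\overline{f}$ restricts to the homeomorphism $f$ near $e$ (the introduction's example with $\left(\left[0,1\right),+_{mod\,1}\right)$ is exactly this obstruction, read in the direction out of the compact group). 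Note also that \cite[Claim 6.4]{BADefCompIJM} is used in the paper to deduce simple connectedness \emph{from} torsion-freeness, not to produce torsion-freeness from divisibility as you suggest. The repair is to run the extension machinery one level up: $\widetilde{\mathcal{G}}$ is simply connected and torsion free (cf.\ \cite[Prop.\ 5.14]{BerarEdMamino}, invoked in the proof of Theorem \ref{T:UniveCoverAbelianGeneral}), and the hypothesis that $\mathcal{G}^{00}$ is a decreasing intersection of simply connected definable sets is precisely what lets one lift $X\supseteq\mathcal{G}^{00}$ homeomorphically into $\widetilde{\mathcal{G}}$ so as to obtain a definable local homomorphism there, to which Theorem \ref{T:ExtofdefLocHomoSCGp} or Corollary \ref{C:ExtlocHomLDGpsTFcase} can legitimately be applied. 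Your second and third steps (lifting into $\widetilde{\mathcal{G}^{\prime}}$, symmetrizing with $f^{-1}$ to get injectivity and openness) are reasonable in outline once this reordering is made, and you correctly flag that transporting the structural hypotheses to the $\mathcal{G}^{\prime}$ side is the remaining delicate point.
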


An important corollary of the above result is the characterization
of the o-minimal universal covers of the abelian definably
connected definably compact semialgebraic groups over $R$
in terms of algebraic groups.

\begin{fact}(\cite[Thm. 10.2]{BADefCompIJM})\label{F:UniveCoverAbelianCompact}
Let $G$ be an abelian definably connected definably compact group
definable in $R$. Then $\widetilde{G}$ is an open locally definable
subgroup of $\widetilde{H\left(R\right)^{0}}$ for some Zariski-connected
$R$-algebraic group $H$.
\end{fact}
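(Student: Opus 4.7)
The strategy is to apply Fact \ref{F:AbelianlocHomLDGpsHaveisosubgintheirUniveCovers} with $\mathcal{G}=G$ and $\mathcal{G}'=H(R)^{0}$ for a suitably chosen commutative $R$-algebraic group $H$. To do so, one must verify the hypotheses on $G$ (divisibility, connectedness, existence of $G^{00}$, and the $\omega$-indexed decreasing presentation of $G^{00}$ by simply connected definable sets) and produce $H$ together with a definable homeomorphism and local homomorphism $f\colon X\to H(R)^{0}$ for some definable $X$ with $G^{00}\subseteq X\subseteq G$.

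The hypotheses on $G$ are largely automatic: since $G$ is abelian definably connected definably compact, $G$ is divisible (standard), and $G^{00}$ exists by the Hrushovski--Peterzil--Pillay machinery applied to definably compact groups. Using sufficient saturation of $R$, $G^{00}$ can be written as the intersection of a countable decreasing chain of definable open neighborhoods of the identity; choosing each such neighborhood inside a definable chart around $e$ (definably homeomorphic to an open ball in $R^{n}$ with $n=\dim G$) makes each member of the chain definably simply connected, giving the required presentation.

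The substantive step is the construction of $H$ and $f$. Here one invokes the structural fact that an abelian definably connected definably compact semialgebraic group over $R$ is definably locally isomorphic, on a neighborhood of the identity, to the identity component of the $R$-points of a commutative $R$-algebraic group. Concretely, $H$ may be taken as a product of copies of $SO_{2}$, with $f$ built from a definable log-type chart on $G$ composed with the inverse of the analogous chart on $H(R)^{0}$. Choosing a definable open neighborhood of $e$ containing $G^{00}$ on which this composition is both a local homomorphism and a homeomorphism onto its image provides the set $X$ and the map $f$.

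With both ingredients in place, Fact \ref{F:AbelianlocHomLDGpsHaveisosubgintheirUniveCovers} gives that $\widetilde{G}$ is an open locally definable subgroup of $\widetilde{H(R)^{0}}$. Finally, replacing $H$ by its Zariski-connected identity component does not affect $H(R)^{0}$ nor its universal cover, so the resulting $H$ is Zariski-connected as required. The main obstacle is the construction of the algebraic group $H$ together with the local isomorphism $f$: this lies outside the local-to-global extension machinery developed earlier in the paper and depends on nontrivial structure theory for abelian definably compact semialgebraic groups over $R$.
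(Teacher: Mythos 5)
The statement is quoted in the paper as a Fact, imported from \cite[Thm.\ 10.2]{BADefCompIJM}; the paper does not reprove it, but it does indicate the intended route: it is a corollary of Fact \ref{F:AbelianlocHomLDGpsHaveisosubgintheirUniveCovers} (= \cite[Thm.\ 10.1]{BADefCompIJM}) combined with \cite[Thm.\ 4.1]{BADefCompIJM}, which supplies a commutative $R$-algebraic group $H$ together with a definable local homomorphism between $G$ and $H(R)^{0}$. Your skeleton matches this route, and your verification of the hypotheses on $G$ (divisibility, existence of $G^{00}$, the countable decreasing presentation of $G^{00}$ by simply connected definable sets) is plausible in outline.

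However, the substantive step is wrong as you state it. Over a sufficiently saturated (hence non-archimedean) real closed field $R$, an abelian definably connected definably compact semialgebraic group is \emph{not} in general definably locally isomorphic to $SO_{2}(R)^{n}$. Already in dimension one this fails: $E(R)^{0}$ for an elliptic curve $E$ over $R$ is abelian, definably connected, and definably compact, but any nontrivial local homomorphism from $E(R)^{0}$ to $SO_{2}(R)$ would have to be given by an elliptic-integral-type map, which is not semialgebraic; one-dimensional semialgebraic groups fall into infinitely many distinct definable local isomorphism classes indexed (in the compact case) by $SO_{2}$ and by elliptic curves with varying $j$-invariant. This is precisely why the theorem asserts only ``for \emph{some} $R$-algebraic group $H$'': the group $H$ must be built from $G$ (in general involving abelian varieties and extensions thereof, not just tori), and its construction is the main content of \cite[Thm.\ 4.1]{BADefCompIJM}. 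There is also no ``log-type chart'' available semialgebraically in the compact directions. So your proposal correctly isolates where the difficulty lies, but the concrete candidate you offer for $H$ does not work, and with it the construction of the required definable local isomorphism $f$ collapses; the gap is exactly the nontrivial structure theory that the cited Theorem 4.1 provides.
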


In fact, the algebraic group $H$ in Fact \ref{F:UniveCoverAbelianCompact}
is commutative since $G$ is abelian, and by Theorem 4.1 in \cite{BADefCompIJM}, $G$ and $H\left(R\right)^{0}$ are definably locally homomorphic.

\subsection{Universal cover of a definably compact semialgebraic group}\label{SS:UniveCoverCompactGeneral}

Now, we will show that Fact \ref{F:UniveCoverAbelianCompact} also
holds for every definably connected definably compact $R$-definable group  not necessarily
abelian. For this, we will use several results of Hrushovski, Peterzil,
and Pillay in \cite{HPP2011} together with the abelian case Fact
\ref{F:UniveCoverAbelianCompact}.

\begin{thm}\label{T:UniveCoverCompactGeneral}
Let $G$ be a definably connected definably compact group definable
in $R$. Then $\widetilde{G}$ is an open locally definable subgroup
of $\widetilde{H\left(R\right)^{0}}$ for some $R$-algebraic group
$H$.
\end{thm}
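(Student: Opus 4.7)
The strategy is to produce, using the structural results of Hrushovski--Peterzil--Pillay \cite{HPP2011} together with the abelian case Fact~\ref{F:UniveCoverAbelianCompact}, an $R$-algebraic group $H$ and a definable local isomorphism between $G$ and $H(R)^0$, and then to lift this local isomorphism to a locally definable open embedding $\widetilde{G}\hookrightarrow\widetilde{H(R)^0}$ by invoking Theorem~\ref{T:ExtofdefLocHomoSCGp} on the simply connected group $\widetilde{G}$. Concretely, I expect to obtain an $R$-algebraic group $H$, a definably connected definable neighborhood $U$ of $e_G$, a definable neighborhood $U'$ of the identity of $H(R)^0$, and a definable homeomorphism $f\colon U\to U'$ which is a local homomorphism of groups. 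The abelian quotient of $G$ is handled by the existing material of \cite{BADefCompIJM} feeding into Fact~\ref{F:UniveCoverAbelianCompact}; the semisimple part comes from \cite{HPP2011} together with the classical correspondence between compact semisimple real Lie groups and the real points of real algebraic groups.

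Given such $f$, let $\pi\colon\widetilde{G}\to G$ and $q\colon\widetilde{H(R)^0}\to H(R)^0$ denote the o-minimal universal covering maps. I choose a definably connected definable open neighborhood $\widetilde{U}$ of the identity of $\widetilde{G}$ on which $\pi$ is a definable homeomorphism onto an open subset of $U$; after shrinking $U'$ to be simply connected, the map $f\circ\pi|_{\widetilde{U}}$ lifts through $q$ to a definable local homomorphism $\tilde f\colon\widetilde{U}\to\widetilde{H(R)^0}$. To apply Theorem~\ref{T:ExtofdefLocHomoSCGp} I need a definable open neighborhood $\widetilde{V}$ of the identity of $\widetilde{G}$, generic in $\widetilde{G}$, with $\widetilde{V}^{-1}\widetilde{V}\subseteq\widetilde{U}$. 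Take $\widetilde{V}\subseteq\widetilde{U}$ symmetric with $\widetilde{V}^{-1}\widetilde{V}\subseteq\widetilde{U}$ and set $V=\pi(\widetilde{V})$: the definable compactness of $G$ yields a finite $A\subseteq G$ with $G=A\cdot V$, and lifting $A$ to $\widetilde{A}\subseteq\widetilde{G}$ one obtains $\widetilde{G}=\ker(\pi)\cdot\widetilde{A}\cdot\widetilde{V}$. Since $\ker(\pi)$ is the o-minimal fundamental group of the definably compact group $G$, which is countable (indeed, finitely generated abelian), the covering set has cardinality less than $\kappa$, so $\widetilde{V}$ is generic in $\widetilde{G}$. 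Theorem~\ref{T:ExtofdefLocHomoSCGp} then provides a unique locally definable homomorphism $\bar f\colon\widetilde{G}\to\widetilde{H(R)^0}$ extending $\tilde f$.

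It remains to check that $\bar f$ is injective with open image, which will realize $\widetilde{G}$ as an open locally definable subgroup of $\widetilde{H(R)^0}$ as required. Since $\bar f$ coincides with the local isomorphism $\tilde f$ on $\widetilde{U}$, $\ker\bar f$ is a locally definable subgroup of $\widetilde{G}$ which is trivial on a neighborhood of the identity; as $\widetilde{G}$ is definably connected and is generated by any such neighborhood (by \cite[Fact~2.3(2)]{PPant12}, as already used in the proof of Theorem~\ref{T:ExtofdefLocHomoSCGp}), it follows that $\bar f$ is injective. Its image is a locally definable subgroup of $\widetilde{H(R)^0}$ containing the open set $\tilde f(\widetilde{U})$, and is therefore open. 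The main obstacle is the first step: extracting from \cite{HPP2011}, beyond the abelian case already treated in \cite{BADefCompIJM}, the definable local isomorphism between $G$ and $H(R)^0$ in the general non-abelian definably compact setting; once $f$ is in hand, the lifting to universal covers and the genericity verification for $\widetilde{V}$ are routine given the machinery of Theorem~\ref{T:ExtofdefLocHomoSCGp}.
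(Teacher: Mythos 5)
Your proposal has a genuine gap, and it sits exactly where you place it yourself: the construction of a definable local isomorphism $f\colon U\subseteq G\rightarrow U^{\prime}\subseteq H\left(R\right)^{0}$ for a general non-abelian definably compact $G$. This is not a routine extraction from \cite{HPP2011}; it is the substantive content of the theorem, and the paper does not argue this way. The paper's proof never produces such an $f$ for $G$ itself. Instead, by \cite[Corollary 6.4]{HPP2011}, $G\simeq\left(G_{0}\times S\right)/F$ is an almost direct product of an abelian definably compact part $G_{0}$ and a semisimple part $S$ with $F$ finite central; since a finite central quotient does not change the o-minimal universal cover, $\widetilde{G}\simeq\widetilde{G_{0}}\times\widetilde{S}$. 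The abelian factor is then handled by quoting Fact \ref{F:UniveCoverAbelianCompact} as a black box, and the semisimple factor needs no local-homomorphism machinery at all: $S$ modulo its finite center is definably isomorphic to $H_{\star}\left(R\right)^{0}$ for an $R$-algebraic group $H_{\star}$ (after identifying the definable real closed fields $R_{i}$ of \cite{HPP2011} with $R$ via \cite{Otero1996OnGA}), whence $\widetilde{S}\simeq\widetilde{H_{\star}\left(R\right)^{0}}$ outright. Your plan would instead require gluing the abelian and semisimple local data into a single definable local isomorphism on $G$, which is precisely what the almost-direct-product decomposition is designed to sidestep; without that decomposition (or an equivalent input), your first step is unsupported and nothing downstream of it can start. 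Note also that Theorem \ref{T:ExtofdefLocHomoSCGp} is not invoked in the paper's proof of this theorem at all --- it enters only indirectly, through the proof of the abelian case in \cite{BADefCompIJM}.

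There is a secondary error in your final step: from the facts that $\ker\overline{f}$ meets a neighborhood $\widetilde{U}$ of the identity trivially and that $\widetilde{U}$ generates $\widetilde{G}$, you cannot conclude that $\ker\overline{f}$ is trivial, only that it is discrete. The paper's own running example makes the point: the kernel of the covering $\bigcup_{n\in\mathbb{N}}\left(-n,n\right)\rightarrow\left(\left[0,1\right),+_{mod\,1}\right)$ is a copy of $\mathbb{Z}$ meeting $(-\frac{1}{2},\frac{1}{2})$ only in $0$, although that interval generates the whole group. The standard repair is to extend the inverse local isomorphism to a homomorphism $\widetilde{H\left(R\right)^{0}}\supseteq\mathcal{W}\rightarrow\widetilde{G}$ in the other direction and check that the composites agree with the identity near $e$, hence everywhere by the generation argument. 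Your genericity verification for $\widetilde{V}$ (finitely many translates by definable compactness of $G$, then countability of $\ker\pi$) is sound, but it cannot compensate for the missing first step.
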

\begin{proof}
By \cite[Corollary 6.4]{HPP2011}, $G$ is definably isomorphic to
the almost direct product of $S$ and $G_{0}$ where $S$ is some
definably connected semisimple definable group, and $G_{0}$ is some
abelian definably connected definably compact group, so $G\simeq\left(G_{0}\times S\right)/F$
for some finite central subgroup $F\subseteq G_{0}\times S$. Therefore,
$G_{0}\times S$ is a finite cover of $G$, then $\widetilde{G}\simeq\widetilde{G_{0}\times S}$.
Since the o-minimal fundamental group for locally definable groups
(see \cite{BarOter} ) has the property that the group $\pi_{1}\left(\mathcal{G}_{1},g_{1}\right)\times\pi_{1}\left(\mathcal{G}_{2},g_{2}\right)$
is isomorphic to the group $\pi_{1}\left(\mathcal{G}_{1}\times\mathcal{G}_{2},\left(g_{1},g_{2}\right)\right)$
for $\mathcal{G}_{1}$, $\mathcal{G}_{2}$ locally definable groups
and $g_{1}$, $g_{2}$ elements in $\mathcal{G}_{1}$, $\mathcal{G}_{2}$,
respectively, then if $\mathcal{G}_{1}$ and $\mathcal{G}_{2}$ are
simply connected, then $\widetilde{\mathcal{G}_{1}\times\mathcal{G}_{2}}$
is isomorphic to $\widetilde{\mathcal{G}_{1}}\times\widetilde{\mathcal{G}_{2}}$
as locally definable groups. Hence, $\widetilde{G_{0}\times S}\simeq\widetilde{G_{0}}\times\widetilde{S}$.

Now, by \cite[Fact 1.2(3)]{HPP2011}, the center $Z\left(S\right)$
of $S$ is finite and $S/Z\left(S\right)$ is definably isomorphic
to a direct product $S_{1}\times\ldots\times S_{n}$ of finitely many
definably simple groups $S_{i}$'s. By \cite[Fact 1.2(1)]{HPP2011},
$S_{i}\simeq H_{i}\left(R_{i}\right)^{0}$ for some real
closed field $R_{i}$ definable in $R$ and some $R_{i}$-algebraic group $H$.
But, by \cite[Thm. 1.1]{Otero1996OnGA}, $R_{i}\simeq R$. Let $H_{\star}\coloneqq H_{1}\times\ldots\times H_{n}$,
then $H_{\star}\left(R\right)^{0}\simeq H_{1}\left(R\right)^{0}\times\ldots\times H_{n}\left(R\right)^{0}\simeq S_{1}\times\ldots\times S_{n}$;
namely, every semisimple semialgebraic group $S$ over $R$ is up
to its center $H_{\star}\left(R\right)^{0}$ for some $R$-algebraic
group $H_{\star}$. Thus, $\widetilde{S}\simeq\widetilde{H_{\star}\left(R\right)^{0}}$.

Now, by Fact \ref{F:UniveCoverAbelianCompact}, $\widetilde{G_{0}}\leq\widetilde{H_{\star\star}\left(R\right)^{0}}$ for
some $R$-algebraic group $H_{\star\star}$, then $\widetilde{G}\simeq\widetilde{G_{0}}\times\widetilde{S}\leq\widetilde{H_{\star\star}\left(R\right)^{0}}\times\widetilde{H_{\star}\left(R\right)^{0}}\simeq\widetilde{H\left(R\right)^{0}}$
for $H\coloneqq H_{\star\star}\times H_{\star}$. Note that $\widetilde{G}$
and $\widetilde{H\left(R\right)^{0}}$ have the same dimension, then
$\widetilde{G}$ is, up to isomorphism of locally definable groups,
an open locally definable subgroup of $\widetilde{H\left(R\right)^{0}}$ for
some $R$-algebraic group $H$.

\end{proof}

\subsection{Universal cover of an abelian semialgebraic group}\label{SS:UniveCoverAbelianGeneral}

In this subsection, we will prove that the o-minimal universal covering
group of an abelian definably connected semialgebraic group over $R$
is a locally definable (group) extension, in the category of locally
definable groups (see \cite[Section 4]{Ed06} for basics on locally
definable extensions), of an open locally definable subgroup of $\widetilde{H_{1}\left(R\right)^{0}}$
by $H_{2}\left(R\right)^{0}$ for some $R$-algebraic groups $H_{1}$,
$H_{2}$. This will mainly follow by the characterization of abelian
groups definable in o-minimal structures \cite{Edmundo} and our previous
results in this work.

Recall that for the sufficiently saturated real closed field $R=\left(R,<,+,\cdot\right)$,
$R_{a}$ denotes the additive group $\left(R.+\right)$, and $R_{m}$
the multiplicative group of positive elements $\left(R^{>0},\cdot\right)$.

\begin{thm}\label{T:UniveCoverAbelianGeneral}
Let $G$ be a definably connected semialgebraic group over $R$. Then there exist a locally definable group $\mathcal{W}$, commutative $R$-algebraic groups $H_{1}$, $H_{2}$ such that $\widetilde{G}$ is a locally definable extension of $\mathcal{W}$ by $H_{2}\left(R\right)^{0}$ where $\mathcal{W}$ is an open subgroup of $\widetilde{H_{1}\left(R\right)^{0}}$. In fact, $H_{2}\left(R\right)^{0}$ is isomorphic to $R_{a}^{k}\times R_{m}^{n}$ as definable groups for some
$k,n\in\mathbb{N}$.
\end{thm}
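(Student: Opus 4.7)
The plan is to combine Edmundo's structure theorem for abelian groups definable in o-minimal structures \cite{Edmundo} with Peterzil--Starchenko's classification of torsion-free abelian semialgebraic groups \cite{PS05} and the definably compact case already recorded in Fact \ref{F:UniveCoverAbelianCompact}. First, I would invoke Edmundo to place $G$ in a definable short exact sequence
\[
1 \to T \to G \to K \to 1
\]
of abelian definably connected groups with $T$ torsion free and $K$ definably compact. By \cite{PS05}, $T$ is definably isomorphic to $R_{a}^{k} \times R_{m}^{n}$ for some $k,n \in \mathbb{N}$; choosing $H_{2}$ to be an $R$-algebraic group whose connected component of $R$-points realises this product (for instance $\mathbb{G}_{a}^{k} \times \mathbb{G}_{m}^{n}$), one has $H_{2}(R)^{0} \simeq T$. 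Applying Fact \ref{F:UniveCoverAbelianCompact} to $K$ supplies a commutative $R$-algebraic group $H_{1}$ such that $\widetilde{K}$ is an open locally definable subgroup of $\widetilde{H_{1}(R)^{0}}$.

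Next I would lift $G \to K$ to the universal covers. The composition $\widetilde{G} \to G \to K$ is a locally definable homomorphism, and since $\widetilde{G}$ is simply connected and $\widetilde{K} \to K$ is a locally definable covering in the sense of \cite{EdPan}, it lifts uniquely to a locally definable homomorphism $\phi : \widetilde{G} \to \widetilde{K}$. Because $\widetilde{G} \to G$ and $\widetilde{K} \to K$ are local homeomorphisms and $G \to K$ is an open surjection, $\phi$ carries a neighbourhood of the identity of $\widetilde{G}$ onto a neighbourhood of the identity of $\widetilde{K}$, so $\mathcal{W} \coloneqq \phi(\widetilde{G})$ is an open locally definable subgroup of $\widetilde{K}$, hence of $\widetilde{H_{1}(R)^{0}}$. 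Since $T$ is torsion free, the argument used in the proof of Corollary \ref{C:ExtlocHomLDGpsTFcase} (via \cite[Claim 6.4]{BADefCompIJM}) shows that $T$ is simply connected, so it lifts through the cover $\widetilde{G} \to G$ to a locally definable subgroup of $\widetilde{G}$ isomorphic to $T \simeq H_{2}(R)^{0}$; this lifted copy is evidently contained in $\ker \phi$.

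It remains to show that $\ker \phi$ equals this lifted copy of $T$; combined with the previous step this yields the required exact sequence
\[
1 \to H_{2}(R)^{0} \to \widetilde{G} \to \mathcal{W} \to 1
\]
in the category of locally definable groups. For this I would appeal to the o-minimal analog of the long exact sequence of homotopy groups for $T \hookrightarrow G \twoheadrightarrow K$, as developed in \cite{BarOter}: simple connectedness of $T$ forces the o-minimal fundamental group map $\pi_{1}(G) \to \pi_{1}(K)$ to be injective, which translates to $\ker \phi$ being contained in the identity component of the preimage of $T$ in $\widetilde{G}$, i.e.\ in the lifted $T$ itself. The main obstacle is precisely this last verification: confirming in the locally definable category, rather than the classical Lie setting, that preimages of definable subgroups under the o-minimal universal cover behave as in topology and that $\phi$ surjects onto the intended open locally definable subgroup. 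Once this is settled, everything else is bookkeeping with the universal cover machinery of \cite{EdPan}.
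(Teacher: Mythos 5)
Your overall strategy is sound but genuinely different from the paper's. Both arguments start from the same two ingredients: Edmundo's exact sequence $1\to T\to G\to K\to 1$ with $T\simeq R_{a}^{k}\times R_{m}^{n}\simeq H_{2}(R)^{0}$, and Fact \ref{F:UniveCoverAbelianCompact} applied to $K$. From there the paper is constructive: it takes the continuous definable section $s:K\to G$ of \cite[Thm. 5.1]{PS05}, forms the associated definable two-cocycle $f:K\times K\to T$, pulls it back along $p_{K}$ to a cocycle $\widetilde{f}$ on $\widetilde{K}$, and exhibits $\widetilde{G}$ explicitly as the cocycle extension $(T\times\widetilde{K},\cdot_{\widetilde{f}})$, checking simple connectedness via torsion-freeness and \cite[Claim 6.4]{BADefCompIJM}; the required extension $1\to H_{2}(R)^{0}\to\widetilde{G}\to\widetilde{K}\to 1$ is then just the projection $\pi_{2}$. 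You instead argue abstractly, lifting $\pi\circ p_{G}$ through $p_{K}$ to $\phi:\widetilde{G}\to\widetilde{K}$ and analyzing $\ker\phi$. If completed, your route gives the same conclusion (indeed your $\mathcal{W}=\phi(\widetilde{G})$ is automatically all of $\widetilde{K}$, an open locally definable subgroup of a connected locally definable group being the whole group); the paper's route buys an explicit model of $\widetilde{G}$, yours avoids cocycles at the cost of more covering-space bookkeeping.

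The gap is exactly where you flag it. The identification of $\ker\phi$ with the lifted copy of $T$ reduces to injectivity of $\pi_{*}:\pi_{1}(G)\to\pi_{1}(K)$, and you propose to extract this from a long exact homotopy sequence for $T\hookrightarrow G\twoheadrightarrow K$ ``as developed in \cite{BarOter}.'' That reference provides covering-space theory and fundamental groups for locally definable spaces, not a fibration sequence for a surjective homomorphism with positive-dimensional kernel, so as written this step rests on a citation that does not deliver it. The repair uses precisely the ingredient the paper leans on: the continuous definable section $s:K\to G$ of \cite[Thm. 5.1]{PS05} makes $(t,k)\mapsto ts(k)$ a definable homeomorphism $T\times K\to G$, and since $T\simeq R_{a}^{k}\times R_{m}^{n}$ is definably simply connected, $\pi_{1}(G)\simeq\pi_{1}(T)\times\pi_{1}(K)\simeq\pi_{1}(K)$ with the isomorphism induced by $\pi$. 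With that in hand your component count goes through: $p_{G}^{-1}(T)\to T$ is a covering whose components are each copies of the simply connected $T$, $\ker\phi$ is a union of such components, and $\ker\phi\cap\ker p_{G}=\ker\left(\pi_{1}(G)\to\pi_{1}(K)\right)=1$ forces $\ker\phi$ to be the identity component alone. So the proposal is reparable, but the key verification needs the section from \cite{PS05} (or some substitute for the missing exact sequence) rather than \cite{BarOter}.
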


\begin{proof}

By \cite{Edmundo}, $G$ is a definable extension of some abelian
definably compact definably connected definable group $K$ by the
maximal torsion free normal definable subgroup $T$ of $G$: $1\rightarrow T\rightarrow G\rightarrow K\rightarrow1$. By \cite[Thm. 10.2]{BADefCompIJM},
$\widetilde{K}\leq\widetilde{H_{1}\left(R\right)^{0}}$ for some commutative
$R$-algebraic group $H_{1}$.

By \cite{PS05}, $T$ is definably isomorphic to $R_{a}^{k}\times R_{m}^{n}$
for some $k,n\in\mathbb{N}$. So in particular, $T\simeq H_{2}\left(R\right)^{0}$ for $H_{2}=\left(R\left(\sqrt{-1}\right),+\right)^{k}\times\left(R\left(\sqrt{-1}\right),\cdot\right)^{n}$.
Thus, so far we have that $1\rightarrow H_{2}\left(R\right)^{0}\rightarrow G\overset{\pi}{\rightarrow}K\rightarrow1$
with $\widetilde{K}\leq\widetilde{H_{1}\left(R\right)^{0}}$ for some
commutative $R$-algebraic groups $H_{1},H_{2}$.

By \cite[Thm. 5.1]{PS05}, there is a continuous definable section
$s:K\rightarrow G$ (continuous with respect with their $\tau$-topologies). Then the map $\varphi:H_{1}\left(R\right)^{0}\times K\rightarrow G$,
$\varphi\left(h,k\right)=hs\left(k\right)$ is a definable homeomorphism
with inverse $\varphi^{-1}\left(g\right)=\left(g\left(s\left(\pi\left(g\right)\right)\right)^{-1},\pi\left(g\right)\right)$
for $g\in G$. Here the direct product $H_{1}\left(R\right)^{0}\times K$
has the product topology, and the groups $K$, $G$, and the subgroup
$H_{1}\left(R\right)^{0}\leq G$ have the $\tau$-topology (\cite{PS00})
which coincides with the t-topology (\cite{Pi}) for definable groups.

Let $f$ be the definable two-cocycle associated with the section $s$,
i.e.,
\[
\begin{aligned}f:K\times K & \rightarrow H_{1}\left(R\right)^{0}\\
\left(k_{1},k_{2}\right) & \mapsto s\left(k_{1}\right)s\left(k_{2}\right)\left(s\left(k_{1}k_{2}\right)\right)^{-1}_{.}
\end{aligned}
\]

Then, $G$ is definably isomorphic to the group $\left(H_{1}\left(R\right)^{0}\times K,\cdot_{f}\right)$
with group operation given by
\[
\left(h,k\right)\cdot_{f}\left(h^{\prime},k^{\prime}\right)=\left(hh^{\prime}f\left(k,k^{\prime}\right),kk^{\prime}\right),
\] through the definable group isomorphism $\varphi$.

Let $p_{K}:\widetilde{K}\rightarrow K$ be the o-minimal universal
covering homomorphism of $K$, and $\textrm{id}:H_{1}\left(R\right)^{0}\rightarrow H_{1}\left(R\right)^{0}$
the identity map on $H_{1}\left(R\right)^{0}$.

Now, let
\[
\begin{aligned}\widetilde{f}:\widetilde{K}\times\widetilde{K} & \rightarrow H_{1}\left(R\right)^{0}\\
\left(\widetilde{k_{1}},\widetilde{k_{2}}\right) & \mapsto f\left(p_{K}\left(\widetilde{k_{1}}\right),p_{K}\left(\widetilde{k_{2}}\right)\right)
\end{aligned}
.\]

The two-cocycle condition (\cite[Eq. 3, Section 3]{Edmundo}) of $f$ implies the same condition for $\widetilde{f}$, thus the group $\left(H_{1}\left(R\right)^{0}\times\widetilde{K},\cdot_{\widetilde{f}}\right)$
with group operation given by
\[
\left(h,\widetilde{k_{1}}\right)\cdot_{\widetilde{f}}\left(h^{\prime},\widetilde{k_{2}}\right)=\left(hh^{\prime}\widetilde{f}\left(\widetilde{k_{1}},\widetilde{k_{2}}\right),\widetilde{k_{1}}\widetilde{k_{2}}\right)
\] induced by $\widetilde{f}$ is a locally definable group. Let $i:H_{1}\left(R\right)^{0}\rightarrow H_{1}\left(R\right)^{0}\times\widetilde{K}$
be the map $h\mapsto\left(h,1\right)$,
and $\pi_{2}:H_{1}\left(R\right)^{0}\times\widetilde{K}\rightarrow\widetilde{K}$
the projection map into the second coordinate. So far we have that
\[
1\rightarrow H_{2}\left(R\right)^{0}\overset{i}{\rightarrow}\left(H_{1}\left(R\right)^{0}\times\widetilde{K},\cdot_{\widetilde{f}}\right)\overset{\pi_{2}}{\rightarrow}\widetilde{K}\rightarrow1
\] is a locally definable extension. Note that the locally definable
group $\left(H_{1}\left(R\right)^{0}\times\widetilde{K},\cdot_{\widetilde{f}}\right)$
is connected because $H_{1}\left(R\right)^{0}$ and $\widetilde{K}$
are both connected \cite[Corollary 4.8(ii)]{Ed06}.

Now, the map
\[
\begin{aligned}\varphi\circ\left(\textrm{id}\times p_{K}\right):\left(H_{1}\left(R\right)^{0}\times\widetilde{K},\cdot_{\widetilde{f}}\right) & \rightarrow G\\
\left(h,\widetilde{k}\right) & \mapsto\varphi\left(h,p_{K}\left(\widetilde{k}\right)\right)=hs\left(p_{K}\left(\widetilde{k}\right)\right)
\end{aligned}
\] is a locally definable covering map. The abelianity of $G$ and the
definition of $\widetilde{f}$ let easily to conclude that $\varphi\circ\left(\textrm{id}\times p_{K}\right)$
is also a group homomorphism. Hence, $\varphi\circ\left(\textrm{id}\times p_{K}\right):\left(H_{1}\left(R\right)^{0}\times\widetilde{K},\cdot_{\widetilde{f}}\right)\rightarrow G$
is a locally definable covering homomorphism.

Next, we will see that $\left(H_{1}\left(R\right)^{0}\times\widetilde{K},\cdot_{\widetilde{f}}\right)$
is simply connected. Note that, by \cite[Proposition 5.14]{BerarEdMamino},
$\widetilde{K}$ is torsion free. So $H_{1}\left(R\right)^{0}$ and
$\widetilde{K}$ are both torsion free, and therefore $\left(H_{1}\left(R\right)^{0}\times\widetilde{K},\cdot_{\widetilde{f}}\right)$
is too. Finally, \cite[Claim 6.4]{BADefCompIJM} yields the simply
connectedness of the group $\left(H_{1}\left(R\right)^{0}\times\widetilde{K},\cdot_{\widetilde{f}}\right)$,
then \[\varphi\circ\left(\textrm{id}\times p_{K}\right):\left(H_{1}\left(R\right)^{0}\times\widetilde{K},\cdot_{\widetilde{f}}\right)\rightarrow G\] is the o-minimal universal covering homomorphism of $G$. We conclude
the proof of the theorem.
\end{proof}

\section*{Acknowledgements}

I warmly thank Assaf Hasson and Kobi Peterzil for their support, generous ideas, and kindness during this work. I also want to express my gratitude to the Ben-Gurion University of the Negev, Israel joint with its warm and helpful staff for supporting my research.

The main results of this paper have been presented at the Israel Mathematical Union (IMU) virtual meeting 2020 on September 6th, 2020 (Israel), the Logic virtual seminar of the Università degli Studi della Campania ``Luigi Vanvitelli'' on May 28th, 2020 (Campania, Italy), and the Logic seminar of the Hebrew University of Jerusalem (Jerusalem, Israel) on December 11th, 2019.

This research was funded by the Israel Science Foundation (ISF) grants No. 181/16 and 1382/15.

\nocite{BaEdErrata}
\bibliographystyle{plain}
\bibliography{IP}

\end{document}